\documentclass[reqno]{amsart}
\usepackage{amsmath}

\begin{document}
\title[Riemann Problem for a limiting system]
{Riemann Problem for a limiting system in elastodynamics}

\author[Anupam Pal Choudhury]
{Anupam Pal Choudhury}

\address{Anupam Pal Choudhury \newline
TIFR Centre for Applicable Mathematics\\
Sharada Nagar, Chikkabommasandra, GKVK P.O.\\
Bangalore 560065, India}
\email{anupam@math.tifrbng.res.in}

\thanks{The author would like to thank the referee for suggesting the corrections and modifications. He would also like to 
avail this opportunity to express his heartiest gratitude to Prof. K.T. Joseph and Prof. Evgeniy Panov for helpful discussions during the preparation of the article}
\subjclass[2010]{35L65, 35L67}
\keywords{Hyperbolic systems of conservation laws, $\delta-$shock wave type solution, the weak asymptotics method.}

\begin{abstract}
In this article, we discuss about the resolution of the Riemann problem for a 2x2 system in
nonconservative form exhibiting parabolic degeneracy. The system can be perceived as the limiting
equation (depending on a parameter tending to 0) of a 2x2 strictly hyperbolic, genuinely nonlinear,
non-conservative system arising in context of a model in elastodynamics.
\end{abstract}

\maketitle
\numberwithin{equation}{section}
\numberwithin{equation}{section}
\newtheorem{theorem}{Theorem}[section]
\newtheorem{remark}[theorem]{Remark}

\section{Introduction}
Let us consider the one-dimensional systems in non-conservative form
\begin{equation}
u_{t}+A(u)u_{x}=0,
\notag
\end{equation}
where $u \in \mathbb{R}^n$, $x \in \mathbb{R}$, and the matrix $A$ is smooth. The study of discontinuous solutions of such systems depend crucially on an appropriate
definition of the nonconservative products involved. Many fruitful attempts in this direction have been made in the past (see \cite{a1,c1,c2,d1,r1,v1}). Using
the approach of family of paths introduced in \cite{d1} (we henceforth refer to them as DLM paths), under the conditions of strict hyperbolicity
and genuine non-linearity/linear degeneracy, existence of solutions in the class of functions of bounded variation (BV) was proved in \cite{l1}.
But as soon as one drops the condition of strict hyperbolicity, the situation turns quite different and we can no more expect the solutions
in the same BV class. Rather in many such instances for conservation laws, it has been found that one needs to appeal to the class of singular solutions.
One such deviation from the condition of strict hyperbolicity is that of \textit{parabolic degeneracy} (the case where the matrix $A(u)$ fails to have
a complete set of right eigenvectors). For a class of systems of conservation laws exhibiting parabolic degeneracy, it was shown in \cite{z1} that singular
concentrations tend to occur in one of the unknowns.\\
In this article we aim to study the Riemann problem in the domain $\Omega=\{(x,t):-\infty<x<\infty,\ t>0\}$ for the following non-conservative system exhibiting parabolic degeneracy:
\begin{equation}
\begin{aligned} 
 u_{t}+uu_{x}-\sigma_{x}&=0, \\
 \sigma_{t}+u\sigma_{x}&=0.
 \end{aligned}
\label{e1.1}
\end{equation}
Before proceeding further with our discussion of the properties of the system \eqref{e1.1}, let us briefly consider the following strictly hyperbolic, genuinely nonlinear, non-conservative system:
\begin{equation}
\begin{aligned}
 u_{t}+uu_{x}-\sigma_{x}&=0, \\
 \sigma_{t}+u\sigma_{x}-k^{2}u_{x}&=0.
\end{aligned}
\label{e1.2}
\end{equation}
This system arises in the study of a model in elastodynamics (see \cite{c1,c2}). Here $u$ is the velocity, $\sigma$ is the stress and $k>0$
is an elasticity constant. The initial-value problem for this system in the domain $\Omega=\{(x,t):-\infty<x<\infty,\ t>0\}$ has been well studied (see \cite{c1,c2,j1,j2,j3,l1}). In particular the Riemann problem for this system
was explicitly solved using Volpert's product in \cite{j3}. It would be interesting to note that the system \eqref{e1.1} can be obtained from
the system \eqref{e1.2} by putting $k=0$.\\
Returning back to the system \eqref{e1.1}, the first step in solving the Riemann problem in the BV class with initial data given by 
\begin{equation}
(u(x,0),\sigma(x,0))=\begin{cases} (u_{L},\sigma_{L}),\,\,\ x<0\\
                      (u_{R},\sigma_{R}),\,\,\ x>0
                     \end{cases}
\label{e1.3}
\end{equation}
would be to understand the shock-wave solutions. But this step again is not so obvious because of the following:
\begin{remark}{\textit{Volpert's product doesn't capture shock-wave type solutions for \eqref{e1.1}}}\\
For the existence of shock-wave solution to the Riemann problem using Volpert's product the following relations need to be satisfied 
$$-s[u]+[\frac{u^2}{2}]-[\sigma]=0,\\
  -s[\sigma]+\frac{1}{2}(u_{L}+u_{R})[\sigma]=0.$$ 
Here $s$ denotes the speed of the discontinuity and $[w]$ denotes the jump in the function $w$ across the discontinuity. It is easy to see 
that the above relations imply that $[\sigma]=0$ which in turn contradicts the fact that $[\sigma]\neq0$. We shall recover this result later from a different perspective. 
\end{remark}
Nevertheless we move on to classify the DLM paths which help to obtain shock-wave type solutions for the system \eqref{e1.1}. A similar 
analysis for the system \eqref{e1.2} classifies the DLM paths which allow shock-wave solutions. A remarkable correspondence between
the two as $k\rightarrow 0$ makes a strong statement so as to consider the system \eqref{e1.1} as a \textit{limiting system} to \eqref{e1.2}.
The next step of finding the rarefaction curves brings in a lot of disappointment considering the following fact.  
\begin{remark}{\textit{Rarefaction wave-type solution for the system \eqref{e1.1} is not possible.}}\\
The co-efficient matrix $A(u,\sigma)$ for the system \eqref{e1.1} is given by $A(u,\sigma)=\left( \begin{array}{cc}
u & -1  \\
0 & u  \end{array} \right).$ The existence of a rarefaction wave solution would require the existence of solution to the system
$$(A-\xi \ Id)\left( \begin{array}{c}
u' \\
\sigma' \end{array} \right)=0.$$ This in turn implies that 
$$\left( \begin{array}{c}
u' \\
\sigma' \end{array} \right)=\left( \begin{array}{c}
1 \\
0 \end{array} \right) $$(the right eigenvector) whence it follows that $\sigma$ is a constant, contradicting our assumption.

\end{remark}

\begin{remark}
 Both the above remarks are actually suggested by the analysis in \cite{j3} if we are willing to consider the system \eqref{e1.1} as a limiting
one of the system \eqref{e1.2} as $k\rightarrow 0$.
\end{remark}
Therefore the existence of solution for \eqref{e1.1} with arbitrary Riemann type initial data seems to call for a different solution space. Considering the 
parabolic degeneracy the system exhibits and the existing results for such systems, searching for singular solutions to the problem seems
to be an option. But would that be physically meaningful?\\
\\
\textbf{Heuristics suggested by physical intuition:}
Let's recall that for the system \eqref{e1.2} the unknown $u$ denotes the velocity and $\sigma$ is the stress. The constant $k$ is an elasticity constant.
Therefore putting $k=0$ would mean that the medium is \textquoteleft \textit{inelastic} \textquoteright. In an inelastic medium, any attempted motion of the constituent particles in the medium with nonconstant velocity (for each of the particles) would render a permanent deformation to the medium and hence the internal force (stress) exerted would become infinite. Mathematically this tends to suggest
that insisting on a non-zero, non-uniform (non-constant) velocity $u$ would require a blow-up or singular concentration in the stress $\sigma$.\\
\\
Motivated by the above heuristics, we prove the existence of singular solutions of the form
\begin{equation}
\begin{aligned}
u(x,t)&=u_{L}(1-H(x-\phi(t)))+u_{R} \ H(x-\phi(t)),\\
\sigma(x,t)&=\sigma_{L}(1-H(x-\phi(t)))+\sigma_{R}\ H(x-\phi(t))+e(t)\delta(x-\phi(t))
\end{aligned}
\notag
\end{equation}
for the system \eqref{e1.1} with arbitrary Riemann type initial data \eqref{e1.3}. In order to do so, following \cite{a1,d2,d3,d4,k1,k2,p2} we define the notion of generalised
delta-shock wave type solutions for the system \eqref{e1.1} and use the method of weak asymptotics to show the existence of such solutions.
We would like to remark here that the existence of weak asymptotic solutions of the system \eqref{e1.1} and \eqref{e1.2} was proved in \cite{p1}
wherein the author defined the generalised delta-shock wave solutions to be distributional limits of weak asymptotic solutions and showed 
the correspondence between the solutions as the parameter $k \rightarrow 0$. But as with the \textit{weak} solutions of partial differential
equations, a proper \textit{integral formulation} of the solution is always important. Also the utility of the results derived in \cite{p1}
in the context of the Riemann problem was not made explicit there. In this article, we deal with both these issues as well. In particular,
we suggest an integral formulation for the generalised delta-shock wave type solutions for the system \eqref{e1.1} and derive the generalised
Rankine-Hugoniot conditions from the integral identities.\\
\\
Thus from physical considerations, the existence of DLM paths giving rise to shock-wave solutions might not seem too useful. 
But from the mathematical point of view, keeping in mind the parabolic degeneracy the system \eqref{e1.1} exhibits, it surely is interesting
to find out which DLM paths are indeed applicable. Also the analysis provides a very nice correspondence between the systems \eqref{e1.1}
and \eqref{e1.2}. At the same time, the results obtained in this article seem to suggest that Volpert's product provides the \textquoteleft \textit{physical}\textquoteright   meaning to
the products involved in the systems  \eqref{e1.1} and \eqref{e1.2}.\\
\\
The structure of this article is as follows. In Section 2, we classify the applicable DLM paths and give examples of paths satifying the condition.
In Section 3, we discuss the existence of generalised delta-shock wave type solutions for the system \eqref{e1.1}. In the concluding Section 4,
we make a few remarks regarding the entropy conditions for the system \eqref{e1.1}.

\section{DLM paths and the existence of shock-wave solutions}
In this section, we give a classification of the DLM paths which allow shock-wave type solutions to the Riemann problem for the systems
\eqref{e1.1} and \eqref{e1.2}.\\
We recall that a DLM path $\phi$ is a locally Lipschitz map $\phi:[0,1]\times \mathbb{R}^2 \times \mathbb{R}^2 \rightarrow \mathbb{R}^2$
satisfying the following properties:\\
1. $\phi(0;v_{L},v_{R})=v_{L}$ and $\phi(1;v_{L},v_{R})=v_{R}$ for any $v_{L}$ and $v_{R}$ in $\mathbb{R}^2$,\\
2. $\phi(t;v,v)=v,$ for any $v$ in $\mathbb{R}^2$ and $t \in [0,1]$,\\
3. For every bounded set $\Omega$ of $\mathbb{R}^{2}$, there exists $k \geq 1$ such that 
$$\lvert \phi_{t}(t;v_{L},v_{R})-\phi_{t}(t;w_{L},w_{R})\rvert \leq
k \lvert (v_{L}-w_{L})-(v_{R}-w_{R})\rvert,$$
for every $v_{L},v_{R},w_{L},w_{R}$ in $\Omega$ and for almost every $t \in [0,1]$.\\
\\
\textbf{Examples:} It is easy to verify from the above definition that the paths $\phi=(\phi_{1},\phi_{2})$ and $\tilde \phi=(\tilde \phi_{1},\tilde \phi_{2})$ given by 
%\begin{equation}
%\phi:\begin{cases} \phi_{1}(t,v_{L}^{1},v_{R}^{1})=\begin{cases} 
 %                                                   v_{L}^{1}+2t(v_{R}^{1}-v_{L}^{1}),\ t \in [0,\frac{1}{2}],\\
  %                                                  v_{R}^{1},\ t \in [\frac{1}{2},1].
  %                                                 \end{cases}\\
  %                      \phi_{2}(t,v_{L}^{2},v_{R}^{2})=v_{L}^{2}+t(v_{R}^{2}-v_{L}^{2}),\ t \in [0,1]. 
  %                        \end{cases}
%\label{e1.3}
%\end{equation}
%and
%\begin{equation}
%\tilde \phi:\begin{cases} \tilde \phi_{1}(t,v_{L}^{1},v_{R}^{1})=\begin{cases} 
 %                                                   v_{L}^{1}+2t(v_{R}^{1}-v_{L}^{1}),\ t \in [0,\frac{1}{2}],\\
  %                                                  v_{R}^{1},\ t \in [\frac{1}{2},1]
  %                                                 \end{cases}\\
  %                      \tilde \phi_{2}(t,v_{L}^{2},v_{R}^{2})= \begin{cases}
  %                                                  v_{L}^{2},\ t \in [0,\frac{1}{2}],\\
  %                                                  v_{L}^{2}+(2t-1)(v_{R}^{2}-v_{L}^{2}),\ t \in [\frac{1}{2},1]\\
  %                                                  \end{cases}
  %                        \end{cases}
%\label{e1.4}
%\end{equation}
\begin{equation}
\phi(t;v_{L},v_{R}):\begin{cases} \phi_{1}(t;u_{L},u_{R})=\begin{cases} 
                                                    u_{L}+2t(u_{R}-u_{L}),\ t \in [0,\frac{1}{2}],\\
                                                    u_{R},\ t \in [\frac{1}{2},1].
                                                   \end{cases}\\
                        \phi_{2}(t;\sigma_{L},\sigma_{R})=\sigma_{L}+t(\sigma_{R}-\sigma_{L}),\ t \in [0,1]. 
                          \end{cases}
\label{e2.1}
\end{equation}
and
\begin{equation}
\tilde \phi(t;v_{L},v_{R}):\begin{cases} \tilde \phi_{1}(t;u_{L},u_{R})=\begin{cases} 
                                                    u_{L}+2t(u_{R}-u_{L}),\ t \in [0,\frac{1}{2}],\\
                                                    u_{R},\ t \in [\frac{1}{2},1]
                                                   \end{cases}\\
                        \tilde \phi_{2}(t;\sigma_{L},\sigma_{R})= \begin{cases}
                                                    \sigma_{L},\ t \in [0,\frac{1}{2}],\\
                                                    \sigma_{L}+(2t-1)(\sigma_{R}-\sigma_{L}),\ t \in [\frac{1}{2},1]\\
                                                    \end{cases}
                          \end{cases}
\label{e2.2}
\end{equation}
where $v_{L}=(u_{L},\sigma_{L}),\ v_{R}=(u_{R},\sigma_{R}) \in \mathbb{R}^2$, satisfy the above conditions and are thus DLM paths.\\
\\
Given a system $$v_{t}+A(v)v_{x}=0,\ \ v(x,t)\in \mathbb{R}^{2},\ x\in \mathbb{R},\ t>0$$ in non-conservative form with Riemann type initial data:
$$v(x,0)=v_{L} \ \text{if} \ x<0, \ v(x,0)=v_{R}\ \text{if}\ x>0,$$ and a DLM path $\phi$, it was shown in \cite{d1} that a shock wave solution exists with speed $s$
if and only if $v_{L},v_{R}$ and $s$ satisfy the following Rankine-Hugoniot (R-H) condition:
\begin{equation}
\int_{0}^{1}\{-s \ Id+A(\phi(t;v_{L},v_{R}))\}\phi_{t}(t;v_{L},v_{R})dt=0,
 \label{e2.3}
\end{equation}
where $Id$ denotes the identity matrix. The Rankine-Hugoniot condition stated above depends on the chosen DLM path $\phi$.\\
We remark that the matrix $A$ for the systems \eqref{e1.1} and \eqref{e1.2} are given by 
$A(u,\sigma)=\left( \begin{array}{cc}
u & -1  \\
0 & u  \end{array} \right)$ 
and 
$A(u,\sigma)=\left( \begin{array}{cc}
u & -1  \\
-k^2 & u  \end{array} \right)$
respectively.\\
\\
We now derive a condition on a DLM path $\phi=(\phi_{1},\phi_{2})$ in order that it might satisfy the R-H condition \eqref{e2.3} for the
systems \eqref{e1.1} and \eqref{e1.2} for given Riemann type initial data 
\begin{equation}
v(x,0)=(u(x,0),\sigma(x,0))=\begin{cases}
                             v_{L}=(u_{L},\sigma_{L}),\ x<0,\\
                             v_{R}=(u_{R},\sigma_{R}),\ x>0.
                            \end{cases}
\label{e2.4} 
\end{equation}
Lax's admissibility condition applied to the systems \eqref{e1.1} and \eqref{e1.2} implies that for a shock-wave type solution we must have
$u_{L}>u_{R}$.
\begin{theorem}
Given initial states $v_{L}=(u_{L},\sigma_{L})$ and $v_{R}=(u_{R},\sigma_{R})$ and a DLM path $\phi=(\phi_{1},\phi_{2})$, the R-H condition \eqref{e2.3}
for the system \eqref{e1.1} is satisfied if 
\begin{equation}
\int_{0}^{1} \phi_{1}(t;u_{L},u_{R}) (\phi_{2})_{t}(t;\sigma_{L},\sigma_{R}) dt=[\sigma].\frac{[\frac{u^2}{2}]-[\sigma]}{[u]},
 \label{e2.5}
\end{equation}
where $[u]=(u_{R}-u_{L})$ denotes the jump in $u$ and $s=\frac{[\frac{u^2}{2}]-[\sigma]}{[u]}$ is the speed of the shock. 
\end{theorem}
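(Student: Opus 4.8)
The plan is to evaluate the matrix Rankine--Hugoniot condition \eqref{e2.3} componentwise for the system \eqref{e1.1}, whose coefficient matrix is $A(u,\sigma)=\left(\begin{smallmatrix} u & -1 \\ 0 & u\end{smallmatrix}\right)$, and to extract from it the single scalar constraint \eqref{e2.5}. First I would write out the integrand $\{-s\,Id+A(\phi)\}\phi_t$, where $\phi=(\phi_1,\phi_2)$ and $\phi_t=((\phi_1)_t,(\phi_2)_t)$. Because $A$ is upper triangular with the lower-left entry equal to $0$, the matrix $-s\,Id+A(\phi)=\left(\begin{smallmatrix} \phi_1-s & -1 \\ 0 & \phi_1-s\end{smallmatrix}\right)$ acts on the velocity vector $\phi_t$ so that the two components of the integrand are $(\phi_1-s)(\phi_1)_t-(\phi_2)_t$ in the first slot and $(\phi_1-s)(\phi_2)_t$ in the second slot. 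The R--H condition then amounts to requiring that the $t$-integral of each of these two expressions over $[0,1]$ vanishes.

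The second component is the one that produces \eqref{e2.5}, so I would handle it carefully. Setting $\int_0^1 (\phi_1-s)(\phi_2)_t\,dt=0$ and using $\phi_2(0)=\sigma_L,\ \phi_2(1)=\sigma_R$ so that $\int_0^1 (\phi_2)_t\,dt=[\sigma]$, I obtain
\begin{equation}
\int_0^1 \phi_1(t)\,(\phi_2)_t\,dt = s\int_0^1 (\phi_2)_t\,dt = s\,[\sigma].
\notag
\end{equation}
It then remains only to substitute the asserted value of the shock speed $s=\bigl([\tfrac{u^2}{2}]-[\sigma]\bigr)/[u]$ into $s\,[\sigma]$ to land exactly on the right-hand side of \eqref{e2.5}. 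Thus the content of \eqref{e2.5} is precisely the vanishing of the second component of the R--H integral, once $s$ has been identified.

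The speed $s$ itself should be pinned down by the first component. I would compute $\int_0^1\bigl[(\phi_1-s)(\phi_1)_t-(\phi_2)_t\bigr]dt=0$. Here the term $\int_0^1 \phi_1(\phi_1)_t\,dt=\tfrac12\bigl(\phi_1(1)^2-\phi_1(0)^2\bigr)=\tfrac12(u_R^2-u_L^2)=[\tfrac{u^2}{2}]$ is a perfect derivative and therefore path-independent, while $\int_0^1 s(\phi_1)_t\,dt=s\,[u]$ and $\int_0^1 (\phi_2)_t\,dt=[\sigma]$. This gives $[\tfrac{u^2}{2}]-s[u]-[\sigma]=0$, i.e.\ exactly $s=\bigl([\tfrac{u^2}{2}]-[\sigma]\bigr)/[u]$, confirming the speed named in the statement. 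A pleasant feature worth emphasizing is that the first R--H equation is automatically satisfied for \emph{every} admissible path (each of its three pieces is either a total derivative or multiplies $s$ against $[u]$), so only the second equation \eqref{e2.5} imposes a genuine, path-dependent restriction.

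The main obstacle is not analytic difficulty but bookkeeping and the correct reading of the nonconservative product through the DLM path: one must keep the roles of $\phi_1$ and $\phi_2$ straight, remember that $\phi_t$ is paired on the right of $A(\phi)$, and be careful that the boundary conditions $\phi(0)=v_L,\ \phi(1)=v_R$ feed in the jumps $[u]$ and $[\sigma]$. The only subtlety worth flagging is that the theorem is stated as a sufficient condition (``is satisfied if''), which matches the fact that \eqref{e2.5} is exactly the second scalar equation while the first is an identity in any path; I would make this explicit so the reader sees why a single scalar equation, rather than the full vector condition, suffices.
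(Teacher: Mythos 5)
Your proposal is correct and follows essentially the same route as the paper: both write out the matrix R--H condition \eqref{e2.3} componentwise for $A(u,\sigma)=\left(\begin{smallmatrix} u & -1 \\ 0 & u\end{smallmatrix}\right)$, extract $s=\bigl([\tfrac{u^2}{2}]-[\sigma]\bigr)/[u]$ from the first scalar equation using the path endpoints, and substitute it into the second to obtain \eqref{e2.5}. Your added remark that the first equation is path-independent (hence only the second imposes a genuine path-dependent constraint) is a nice observation left implicit in the paper, but it does not change the argument.
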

\begin{proof}
We recall that for the system \eqref{e1.1} we have
$A(u,\sigma)=\left( \begin{array}{cc}
u & -1  \\
0 & u  \end{array} \right)$. Substituting this in \eqref{e2.3}, we obtain 
$\int_{0}^{1} \left( \begin{array}{cc}
-s+\phi_{1} & -1  \\
0 & -s+\phi_{1}  \end{array} \right)
\left( \begin{array}{c}
(\phi_{1})_{t}  \\
(\phi_{2})_{t}  \end{array} \right)= 0$.\\
Thus we have the relations
\begin{equation}
 \int_{0}^{1} -s(\phi_{1})_{t}+\phi_{1}(\phi_{1})_{t}-(\phi_{2})_{t} \ dt= 0,
\label{e2.6}
\end{equation}
and
\begin{equation}
 \int_{0}^{1} -s(\phi_{2})_{t}+\phi_{1}(\phi_{2})_{t} \ dt= 0.
 \label{e2.7}
\end{equation}
Now \eqref{e2.6} on simplification (using the fact $\phi_{1}(0)=u_{L},\ \phi_{1}(1)=u_{R},\ \phi_{2}(0)=\sigma_{L},\ \phi_{2}(1)=\sigma_{R}$) gives $s=\frac{[\frac{u^2}{2}]-[\sigma]}{[u]}$. Substituting this in 
\eqref{e2.7} we obtain 
$$\int_{0}^{1} \phi_{1}(t;u_{L},u_{R}) (\phi_{2})_{t}(t;\sigma_{L},\sigma_{R}) dt=[\sigma].\frac{[\frac{u^2}{2}]-[\sigma]}{[u]}.$$
\end{proof}

\begin{remark}
It can be easily seen that the straight line path giving rise to Volpert's product doesn't satisfy the relation \eqref{e2.5}. Let
$\phi_{1}(t)=u_{L}+t(u_{R}-u_{L})$ and $\phi_{2}(t)=\sigma_{L}+t(\sigma_{R}-\sigma_{L})$ be the two components of the straight line path $\phi$.
Substituting these in \eqref{e2.5}, we get  
\begin{equation}
\begin{aligned}
\int_{0}^{1}(u_{L}+t(u_{R}-u_{L}))dt.[\sigma] &=\frac{[\frac{u^2}{2}]-[\sigma]}{[u]}.[\sigma]\\
\Longrightarrow u_{L}+\frac{[u]}{2} &=\frac{[\frac{u^2}{2}]-[\sigma]}{[u]}\\
\Longrightarrow \frac{[\sigma]}{[u]}&=0\\
\Longrightarrow [\sigma]&=0,
\end{aligned}
 \notag
\end{equation}
which is a contradiction.
\end{remark}

Proceeding similarly as in the above Theorem $2.1$, we can prove the corresponding result for the system \eqref{e1.2}.
\begin{theorem}
 Given initial states $v_{L}=(u_{L},\sigma_{L})$ and $v_{R}=(u_{R},\sigma_{R})$ and a DLM path $\phi=(\phi_{1},\phi_{2})$, the R-H condition \eqref{e2.3}
for the system \eqref{e1.2} is satisfied if 
\begin{equation}
\int_{0}^{1} \phi_{1}(t;u_{L},u_{R}) (\phi_{2})_{t}(t;\sigma_{L},\sigma_{R}) dt=[\sigma].\frac{[\frac{u^2}{2}]-[\sigma]}{[u]}+k^2[u],
 \label{e2.8}
\end{equation}
where $[u]=(u_{R}-u_{L})$ denotes the jump in $u$ and $s=\frac{[\frac{u^2}{2}]-[\sigma]}{[u]}$ is the speed of the shock. 
\end{theorem}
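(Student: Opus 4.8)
The plan is to mimic the computation of Theorem 2.1 verbatim, exploiting the fact that the only difference between the two systems lies in the $(2,1)$-entry of the coefficient matrix. First I would substitute $A(u,\sigma)=\left(\begin{array}{cc} u & -1 \\ -k^2 & u \end{array}\right)$ into the R-H condition \eqref{e2.3}, obtaining $\int_{0}^{1}\left(\begin{array}{cc} -s+\phi_{1} & -1 \\ -k^2 & -s+\phi_{1} \end{array}\right)\left(\begin{array}{c}(\phi_{1})_{t} \\ (\phi_{2})_{t}\end{array}\right)dt=0$. Reading off the two rows gives a pair of scalar integral identities: the first is \emph{exactly} \eqref{e2.6}, and the second is \eqref{e2.7} augmented by the single extra term $-k^2(\phi_{1})_{t}$ coming from the new matrix entry.

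Next I would treat the first identity. Since the first rows of the two coefficient matrices coincide, this identity is literally \eqref{e2.6}, so simplifying it via the fundamental theorem of calculus together with the endpoint conditions $\phi_{1}(0)=u_{L},\ \phi_{1}(1)=u_{R},\ \phi_{2}(0)=\sigma_{L},\ \phi_{2}(1)=\sigma_{R}$ — in particular using $\int_{0}^{1}\phi_{1}(\phi_{1})_{t}\,dt=\frac{1}{2}(u_{R}^{2}-u_{L}^{2})=[\frac{u^2}{2}]$ — again yields $s=\frac{[\frac{u^2}{2}]-[\sigma]}{[u]}$. The point worth stressing is that the presence of $k$ does not alter the speed, which is precisely the correspondence between \eqref{e1.1} and \eqref{e1.2} alluded to in the introduction.

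Finally I would turn to the second identity, $\int_{0}^{1}\left(-k^2(\phi_{1})_{t}-s(\phi_{2})_{t}+\phi_{1}(\phi_{2})_{t}\right)dt=0$. The new term integrates to $\int_{0}^{1}-k^2(\phi_{1})_{t}\,dt=-k^2[u]$, while the remaining terms reproduce those already handled in Theorem 2.1. Rearranging and inserting the value of $s$ found above gives $\int_{0}^{1}\phi_{1}(\phi_{2})_{t}\,dt=s[\sigma]+k^2[u]=[\sigma]\cdot\frac{[\frac{u^2}{2}]-[\sigma]}{[u]}+k^2[u]$, which is \eqref{e2.8}.

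There is no genuine obstacle here: the argument is a direct transcription of Theorem 2.1 with careful bookkeeping of the one extra term. The only place to stay attentive is the sign, and the observation that $-k^2(\phi_{1})_{t}$ integrates cleanly to the jump $-k^2[u]$ \emph{independently of the chosen path}, so that \eqref{e2.8} differs from \eqref{e2.5} only by the path-independent summand $k^2[u]$. In particular, letting $k\to0$ recovers \eqref{e2.5} exactly, confirming the interpretation of \eqref{e1.1} as the limiting system of \eqref{e1.2}.
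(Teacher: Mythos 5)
Your proposal is correct and is essentially the paper's own proof: the paper simply says to repeat the calculation of Theorem 2.1 with the matrix $A(u,\sigma)=\left(\begin{array}{cc} u & -1 \\ -k^2 & u \end{array}\right)$, which is exactly what you carry out, correctly noting that the unchanged first row reproduces the speed $s=\frac{[\frac{u^2}{2}]-[\sigma]}{[u]}$ and that the extra entry contributes the path-independent term $-k^2\int_0^1(\phi_1)_t\,dt=-k^2[u]$ in the second identity, yielding \eqref{e2.8}.
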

\begin{proof}
 A similar calculation as in the proof of Theorem $2.1$ with the corresponding matrix 
$A(u,\sigma)=\left( \begin{array}{cc}
u & -1  \\
-k^2 & u  \end{array} \right)$
for the system \eqref{e1.2} gives the result.
\end{proof}

\begin{remark}
 The relations \eqref{e2.5} and \eqref{e2.8} exhibit a nice correspondence between the systems \eqref{e1.1} and \eqref{e1.2}. In particular,
 putting $k=0$ in \eqref{e2.8} we recover the relation \eqref{e2.5} for the system \eqref{e1.1}.
\end{remark}

Next we prove the existence of DLM paths satisfying the conditions \eqref{e2.5} and \eqref{e2.8} for the systems \eqref{e1.1}
and \eqref{e1.2} respectively. In particular, we show that given a left state $v_{L}=(u_{L},\sigma_{L})$ the paths $\phi$ and $\tilde \phi$
given by \eqref{e2.1} and \eqref{e2.2} give rise to shock curves passing through $v_{L}$. Thus for any state $v_{R}=(u_{R},\sigma_{R})$
(with $u_{R}<u_{L}$)  lying on these curves, we can solve the Riemann problem using a shock wave with speed $s$ mentioned above.
\begin{theorem}
 Given a left state $v_{L}=(u_{L},\sigma_{L})$, the DLM path $\phi$ defined in \eqref{e2.1} gives a shock-wave solution for the system \eqref{e1.1}
with the right states $v_{R}=(u_{R},\sigma_{R})$ on the shock curve
\begin{equation}
S_{1}:\sigma=\sigma_{L}-\frac{1}{4}(u-u_{L})^2,\ u<u_{L}.
\label{e2.9}
\end{equation}
\end{theorem}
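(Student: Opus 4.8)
The plan is to specialize the general criterion \eqref{e2.5} of Theorem 2.1 to the explicit path $\phi$ of \eqref{e2.1} and read off the relation between $v_{R}$ and $v_{L}$ that it forces. Since Theorem 2.1 tells us that the existence of a shock-wave solution through a given DLM path is equivalent to \eqref{e2.5}, and since $\phi$ has already been verified to be an admissible DLM path in the Examples, the whole argument reduces to evaluating the two sides of \eqref{e2.5} and performing an algebraic simplification.

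First I would exploit the fact that the second component $\phi_{2}(t;\sigma_{L},\sigma_{R})=\sigma_{L}+t(\sigma_{R}-\sigma_{L})$ is affine in $t$, so that $(\phi_{2})_{t}\equiv\sigma_{R}-\sigma_{L}=[\sigma]$ is constant and can be pulled out of the integral on the left-hand side of \eqref{e2.5}. This collapses the left-hand side to $[\sigma]\int_{0}^{1}\phi_{1}(t;u_{L},u_{R})\,dt$. Next I would compute $\int_{0}^{1}\phi_{1}\,dt$ by splitting the integral at $t=\tfrac12$: on $[0,\tfrac12]$ the first component ramps linearly from $u_{L}$ to $u_{R}$, while on $[\tfrac12,1]$ it is the constant $u_{R}$. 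A direct integration of these two pieces yields $\int_{0}^{1}\phi_{1}\,dt=\tfrac14(u_{L}+3u_{R})$.

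I would then simplify the right-hand side of \eqref{e2.5} using $\dfrac{[u^{2}/2]}{[u]}=\dfrac{u_{L}+u_{R}}{2}$, so that \eqref{e2.5} becomes $[\sigma]\cdot\tfrac14(u_{L}+3u_{R})=[\sigma]\bigl(\tfrac{u_{L}+u_{R}}{2}-\tfrac{[\sigma]}{[u]}\bigr)$. Bringing everything to one side, the coefficient of $[\sigma]$ simplifies to $\tfrac14[u]$, and after clearing the nonzero factor $[u]$ the equation factors as $[\sigma]\bigl([\sigma]+\tfrac14[u]^{2}\bigr)=0$. The degenerate branch $[\sigma]=0$ corresponds to the trivial situation, while the nontrivial branch $[\sigma]=-\tfrac14[u]^{2}$, i.e. $\sigma_{R}-\sigma_{L}=-\tfrac14(u_{R}-u_{L})^{2}$, is exactly the curve $S_{1}$ of \eqref{e2.9} after relabelling $(u_{R},\sigma_{R})$ as $(u,\sigma)$. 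The restriction $u<u_{L}$ is precisely the Lax admissibility condition $u_{L}>u_{R}$ recorded just before Theorem 2.1, which selects the admissible half of the parabola.

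I do not anticipate a conceptual obstacle, as the computation is elementary once $\phi$ is substituted. The only points requiring mild care are the piecewise bookkeeping in evaluating $\int_{0}^{1}\phi_{1}\,dt$ and the clean factorization of the final quadratic in $[\sigma]$, which is what makes the shock curve emerge in the stated closed form. I would also keep track of the spurious branch $[\sigma]=0$ so as to confirm that $S_{1}$ is genuinely the curve of nonconservative shocks captured by this path, consistent with the Remark following Theorem 2.1 showing that the straight-line (Volpert) path fails to satisfy \eqref{e2.5}.
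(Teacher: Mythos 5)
Your proposal is correct and follows exactly the route of the paper's proof, which simply states that the result "follows from a straightforward calculation by substituting $\phi_{1}$ and $\phi_{2}$ from \eqref{e2.1} in \eqref{e2.5}"; your computation of $\int_{0}^{1}\phi_{1}\,dt=\tfrac14(u_{L}+3u_{R})$, the reduction to $[\sigma]\bigl([\sigma]+\tfrac14[u]^{2}\bigr)=0$, and the selection of the nontrivial branch $[\sigma]=-\tfrac14[u]^{2}$ with $u<u_{L}$ from Lax admissibility fill in precisely that calculation. No gaps.
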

\begin{proof}
The result follows from a straightforward calculation by substituting $\phi_{1}$ and $\phi_{2}$ from \eqref{e2.1} in \eqref{e2.5}.  
\end{proof}
Similar to the above theorem for the system \eqref{e1.2} we have the following result.
\begin{theorem}
 Given a left state $v_{L}=(u_{L},\sigma_{L})$, the DLM path $\phi$ defined in \eqref{e2.1} gives a shock-wave solution for the system \eqref{e1.2}
with the right states $v_{R}=(u_{R},\sigma_{R})$ on the shock curves
\begin{equation}
S_{1}:\sigma=\sigma_{L}-\frac{1}{8}((u-u_{L})^2-\sqrt{(u-u_{L})^4+64k^2(u-u_{L})^2}),\ u<u_{L},
\label{e2.10}
\end{equation}
\begin{equation}
S_{2}:\sigma=\sigma_{L}-\frac{1}{8}((u-u_{L})^2+\sqrt{(u-u_{L})^4+64k^2(u-u_{L})^2}),\ u<u_{L},
\label{e2.11}
\end{equation}
\end{theorem}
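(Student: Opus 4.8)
The plan is to mimic the proof of Theorem 2.5 but to work with the R-H condition \eqref{e2.8} for the system \eqref{e1.2} rather than \eqref{e2.5}, the only structural change being the extra term $k^2[u]$ on the right-hand side. First I would substitute the two components of the path $\phi$ from \eqref{e2.1} into the left-hand integral of \eqref{e2.8}. Since $\phi_2$ is the straight-line interpolation, its derivative $(\phi_2)_t=\sigma_R-\sigma_L=[\sigma]$ is constant on $[0,1]$, so the integral collapses to $[\sigma]\int_0^1\phi_1(t)\,dt$. Splitting this integral at $t=\tfrac12$ according to the piecewise definition of $\phi_1$ and evaluating the two elementary pieces gives $\int_0^1\phi_1\,dt=\tfrac{1}{4}(u_L+3u_R)$, so the left-hand side of \eqref{e2.8} becomes $[\sigma]\cdot\tfrac{1}{4}(u_L+3u_R)$.

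Next I would simplify the right-hand side of \eqref{e2.8}. Using $[\tfrac{u^2}{2}]/[u]=\tfrac12(u_L+u_R)$, the right-hand side reads $[\sigma]\big(\tfrac12(u_L+u_R)-[\sigma]/[u]\big)+k^2[u]$. Equating the two sides, the difference of the terms linear in $[\sigma]$ produces the factor $[\sigma]\,[u]/4$, and after multiplying through by $[u]$ the whole relation reduces to the quadratic
\[
[\sigma]^2+\frac{[u]^2}{4}[\sigma]-k^2[u]^2=0
\]
in the unknown jump $[\sigma]$, with coefficients depending only on $[u]=u-u_L$. This step is the crux of the argument: unlike the $k=0$ case of Theorem 2.5, where the constant term vanishes and one is left with a linear (genuinely one-curve) condition, here the presence of $k^2[u]^2$ makes the equation a bona fide quadratic with a nonzero constant term, which is exactly what forces two shock curves to appear.

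Solving by the usual formula and simplifying the discriminant via $\tfrac{[u]^4}{16}+4k^2[u]^2=\tfrac{1}{16}\big([u]^4+64k^2[u]^2\big)$ yields the two roots $[\sigma]=-\tfrac{1}{8}\big([u]^2\mp\sqrt{[u]^4+64k^2[u]^2}\big)$. Writing $[\sigma]=\sigma-\sigma_L$ and $[u]=u-u_L$ and reading off the two sign choices gives precisely the curves $S_1$ (the upper-sign branch, \eqref{e2.10}) and $S_2$ (the lower-sign branch, \eqref{e2.11}). Finally I would record that the admissibility restriction $u<u_L$ is inherited from the Lax condition $u_L>u_R$ noted before Theorem 2.1, and as a consistency check observe that letting $k\to0$ collapses the discriminant to $[u]^2$, so that $S_2$ recovers the single curve $\sigma=\sigma_L-\tfrac14(u-u_L)^2$ of \eqref{e2.9} while $S_1$ degenerates to the trivial jump $[\sigma]=0$, in agreement with Remark 2.4. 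The only real labor here is the routine algebra of the substitution and the quadratic solve; there is no genuine analytic obstacle, and the substantive content is the qualitative observation that the elasticity term splits the shock locus into two branches.
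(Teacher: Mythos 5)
Your proposal is correct and follows essentially the same route as the paper: substitute the path \eqref{e2.1} into the R-H condition \eqref{e2.8}, reduce to a quadratic in $[\sigma]$ (your $[\sigma]^2+\tfrac{[u]^2}{4}[\sigma]-k^2[u]^2=0$ is exactly the paper's $4[\sigma]^2+(u_R-u_L)^2[\sigma]-4k^2(u_R-u_L)^2=0$ divided by $4$), and solve to obtain the two branches $S_1$ and $S_2$. The only blemish is a harmless citation slip: the $k\to 0$ degeneration you check is the content of the remark immediately following this theorem (Remark 2.7 in the paper's numbering), not Remark 2.4.
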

\begin{proof}
 Substituting $\phi_{1}$ and $\phi_{2}$ from \eqref{e2.1} in \eqref{e2.8} we obtain the following quadratic equation in $[\sigma]$:
$$4[\sigma]^2+(u_{R}-u_{L})^2[\sigma]-4k^2(u_{R}-u_{L})^2=0.$$
The above equation when solved gives us the required expressions for $S_{1}$ and $S_{2}$.
\end{proof}
\begin{remark}
 As $k$ tends to $0$, the shock curve $S_{2}$ defined in \eqref{e2.11} for the system \eqref{e1.2} tends to the shock curve for the system 
\eqref{e1.1} given by \eqref{e2.9}, while the other shock curve $S_{1}$ degenerates.
\end{remark}

\begin{remark}
 The path $\phi$ is an example of a DLM path satisfying \eqref{e2.5} with $\phi_{2}$ being a straight line. In particular, it follows from
the geometrical interpretation of \eqref{e2.5} that if $\phi$ is a DLM path such that $\phi_{2}$ is a straight line but area under the curve $\phi_{1}$
is different from the area under the straight line connecting $u_{L}$ and $u_{R}$, then such a DLM path would give rise to a shock-wave solution
to the Riemann problem for the system \eqref{e1.1}.  
\end{remark}

Next we state the analogous results for the DLM path $\tilde \phi$ defined in \eqref{e2.2}.
\begin{theorem}
 Given a left state $v_{L}=(u_{L},\sigma_{L})$, the DLM path $\tilde \phi$ defined in \eqref{e2.2} gives a shock-wave solution for the system \eqref{e1.1}
with the right states $v_{R}=(u_{R},\sigma_{R})$ on the shock curve
\begin{equation}
S_{1}:\sigma=\sigma_{L}-\frac{1}{2}(u-u_{L})^2,\ u<u_{L}.
\label{e2.12}
\end{equation}
\end{theorem}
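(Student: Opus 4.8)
The plan is to carry out for the path $\tilde\phi$ the same specialization of the R-H condition \eqref{e2.5} that yields the shock curve \eqref{e2.9} for the path $\phi$; only the evaluation of the integral differs. The decisive structural feature of $\tilde\phi$ is that its second component $\tilde\phi_2$ is held constant on $[0,\tfrac12]$ and performs all of its variation on $[\tfrac12,1]$, which is precisely the interval on which the first component $\tilde\phi_1$ has already frozen at the value $u_R$.

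First I would read off $(\tilde\phi_2)_t$ from \eqref{e2.2}: it vanishes on $[0,\tfrac12]$ and equals $2(\sigma_R-\sigma_L)=2[\sigma]$ on $[\tfrac12,1]$. Since $\tilde\phi_1\equiv u_R$ on $[\tfrac12,1]$, the left-hand side of \eqref{e2.5} reduces to the single elementary integral
\[
\int_0^1 \tilde\phi_1\,(\tilde\phi_2)_t\,dt=\int_{1/2}^{1} u_R\cdot 2[\sigma]\,dt=u_R\,[\sigma].
\]

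Next I would equate this with the right-hand side $[\sigma]\,s$ of \eqref{e2.5}, where $s=([\frac{u^2}{2}]-[\sigma])/[u]$ is the shock speed already fixed in Theorem 2.1. Dividing through by $[\sigma]$ --- legitimate since a nontrivial stress jump $[\sigma]\neq0$ is exactly the regime of interest, the case $[\sigma]=0$ reducing to a constant state --- gives $u_R\,[u]=[\frac{u^2}{2}]-[\sigma]$. Substituting $[\frac{u^2}{2}]=\tfrac12[u](u_R+u_L)$ and simplifying collapses this to $[\sigma]=-\tfrac12[u]^2$, that is $\sigma_R=\sigma_L-\tfrac12(u_R-u_L)^2$. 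Letting the right state vary then sweeps out the curve \eqref{e2.12}, while the Lax admissibility condition $u_R<u_L$ recorded before Theorem 2.1 confines the admissible right states to the branch $u<u_L$.

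I do not expect any genuine obstacle here: the argument is a direct substitution into \eqref{e2.5}, and the only point needing a word of justification is the harmless cancellation of $[\sigma]$. The one genuinely informative feature, which I would emphasize, is the comparison with \eqref{e2.9}: the quadratic coefficient $\tfrac12$ obtained here, against $\tfrac14$ for the path $\phi$, stems entirely from the fact that $\tilde\phi_2$ varies while $\tilde\phi_1$ sits at $u_R$, so the ``area'' weighting implicit in \eqref{e2.5} differs from that produced by $\phi$. This matches the geometric interpretation of \eqref{e2.5} noted in the earlier remark and shows that distinct admissible DLM paths yield genuinely distinct shock curves through the same left state $v_L$.
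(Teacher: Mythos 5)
Your proposal is correct and takes essentially the same route as the paper: the paper's proof is the one-line assertion that the result follows by substituting $\tilde\phi_{1}$ and $\tilde\phi_{2}$ from \eqref{e2.2} into \eqref{e2.5}, and your computation ($\int_{0}^{1}\tilde\phi_{1}(\tilde\phi_{2})_{t}\,dt=u_{R}[\sigma]$, hence $[\sigma]=-\tfrac{1}{2}[u]^{2}$, i.e. the curve \eqref{e2.12}) is exactly that calculation written out, together with the Lax restriction $u_{R}<u_{L}$. One negligible side remark: when $[\sigma]=0$ the condition \eqref{e2.5} is also satisfied with $[u]\neq 0$ (a pure Burgers-type shock with constant $\sigma$), so that case is not merely ``a constant state''; but this does not affect your argument, since the theorem concerns right states on the curve, where $[\sigma]\neq 0$ for $u\neq u_{L}$.
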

\begin{proof}
 The result follows from a straightforward calculation by substituting $\tilde \phi_{1}$ and $\tilde \phi_{2}$ from \eqref{e2.2} in \eqref{e2.5}. 
\end{proof}

\begin{theorem}
 Given a left state $v_{L}=(u_{L},\sigma_{L})$, the DLM path $\tilde \phi$ defined in \eqref{e2.2} gives a shock-wave solution for the system \eqref{e1.2}
with the right states $v_{R}=(u_{R},\sigma_{R})$ on the shock curves
\begin{equation}
S_{1}:\sigma=\sigma_{L}-\frac{1}{4}((u-u_{L})^2-\sqrt{(u-u_{L})^4+16k^2(u-u_{L})^2}),\ u<u_{L},
\label{e2.13}
\end{equation}
\begin{equation}
S_{2}:\sigma=\sigma_{L}-\frac{1}{4}((u-u_{L})^2+\sqrt{(u-u_{L})^4+16k^2(u-u_{L})^2}),\ u<u_{L}.
\label{e2.14}
\end{equation}
\end{theorem}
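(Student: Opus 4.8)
The plan is to proceed exactly as in the proof of Theorem~2.6 and its system~\eqref{e1.2} counterpart, namely by substituting the path $\tilde\phi=(\tilde\phi_1,\tilde\phi_2)$ from \eqref{e2.2} into the R-H condition \eqref{e2.8} and evaluating the left-hand integral. The key structural feature I would exploit is that $\tilde\phi_2$ is constant on $[0,\frac12]$, so that $(\tilde\phi_2)_t$ vanishes there, while on $[\frac12,1]$ one has $(\tilde\phi_2)_t=2(\sigma_R-\sigma_L)=2[\sigma]$ and simultaneously $\tilde\phi_1\equiv u_R$. Hence the integral collapses to a single contribution over the second half-interval,
$$\int_{0}^{1}\tilde\phi_{1}(t;u_L,u_R)\,(\tilde\phi_{2})_{t}(t;\sigma_L,\sigma_R)\,dt=\int_{1/2}^{1}u_R\cdot 2[\sigma]\,dt=u_R[\sigma],$$
which is the cleanest part of the computation.

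Next I would insert this into \eqref{e2.8}, using the elementary identity $[\frac{u^2}{2}]/[u]=\tfrac12(u_L+u_R)$, and simplify. The left-over factor $u_R-\tfrac12(u_L+u_R)$ equals $\tfrac12[u]$, so after clearing the denominator $[u]$ the condition becomes the quadratic in $[\sigma]$
$$2[\sigma]^{2}+[u]^{2}[\sigma]-2k^{2}[u]^{2}=0.$$
Applying the quadratic formula yields $[\sigma]=-\tfrac14\bigl([u]^{2}\mp\sqrt{[u]^{4}+16k^{2}[u]^{2}}\bigr)$, and writing $[u]=u-u_L$, $[\sigma]=\sigma-\sigma_L$ reproduces precisely the two curves $S_1$ and $S_2$ of \eqref{e2.13} and \eqref{e2.14}. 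The restriction $u<u_L$ is inherited from Lax's admissibility condition recorded just before the theorems.

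There is no genuine analytic obstacle here; the argument is a routine specialization of the already-proved relation \eqref{e2.8}, and its only delicate point is bookkeeping with the piecewise definition of $\tilde\phi$, in particular remembering that $(\tilde\phi_2)_t$ is supported only on $[\frac12,1]$ where $\tilde\phi_1$ has already saturated to $u_R$. The one verification I would still want to carry out, to tie the result into the paper's broader narrative of \eqref{e1.1} as a limit of \eqref{e1.2}, is the behaviour as $k\to0$: the root with the $+$ sign (curve $S_2$) tends to $[\sigma]=-\tfrac12[u]^{2}$, recovering the curve \eqref{e2.12} of Theorem~2.6 for the limiting system, whereas the $-$ root (curve $S_1$) degenerates to $[\sigma]=0$. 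This limiting check is the natural place to confirm consistency with the earlier remarks and is really the only step that carries conceptual, as opposed to purely computational, content.
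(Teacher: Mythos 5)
Your proposal is correct and follows essentially the same route as the paper: substituting $\tilde\phi$ from \eqref{e2.2} into \eqref{e2.8} and reducing to the quadratic $2[\sigma]^{2}+[u]^{2}[\sigma]-2k^{2}[u]^{2}=0$, which is precisely the equation the paper's proof states and solves. Your explicit evaluation of the integral (using that $(\tilde\phi_{2})_{t}$ is supported on $[\tfrac12,1]$ where $\tilde\phi_{1}\equiv u_{R}$) and the $k\to 0$ consistency check simply fill in details the paper leaves implicit.
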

\begin{proof}
Substituting $\tilde \phi_{1}$ and $\tilde \phi_{2}$ from \eqref{e2.2} in \eqref{e2.8} we obtain the following quadratic equation in $[\sigma]$:
$$2[\sigma]^2+(u_{R}-u_{L})^2[\sigma]-2k^2(u_{R}-u_{L})^2=0.$$
The above equation when solved gives us the required expressions for $S_{1}$ and $S_{2}$.
\end{proof}

\begin{remark}
 As $k$ tends to $0$, the shock curve $S_{2}$ defined in \eqref{e2.14} for the system \eqref{e1.2} tends to the shock curve for the system 
\eqref{e1.1} given by \eqref{e2.12}, while the other shock curve $S_{1}$ degenerates.
\end{remark}

\section{Generalised Delta-shock wave type solutions and the Riemann problem}
In this section, we prove the existence of generalised delta-shock wave type solution for the system \eqref{e1.1} with Riemann type initial
data and discuss its role in solving the Riemann problem. Here we use the method of weak asymptotics (see \cite{a1,d2,d3,d4,k1,k2,p2}) to construct the solution.
As already mentioned in the introduction, the existence of weak asymptotics solution for the system \eqref{e1.1} was proved in \cite{p1} but an integral formulation
for the generalised delta-shock wave type solution wasn't given nor was its role in resolving the Riemann problem discussed explicitly. 
For the sake of completeness, we include here part of the results and calculations from \cite{p1}.

\subsection{The method of weak asymptotics}
Let us denote by $\mathcal{D}$ and $\mathcal{D}^{\prime}$ the space of smooth functions with compact support and the space of distributions respectively.
By $O_{\mathcal{D}^{\prime}}(\epsilon^{\alpha})$ we denote the collection of distributions $f(x,t,\epsilon) \in \mathcal{D}^{\prime}(\mathbb{R})$ such that for any test function
$\varphi(x) \in \mathcal{D}(\mathbb{R})$ the estimate $$\langle f(x,t,\epsilon),\varphi(x)\rangle=O(\epsilon^{\alpha})$$
holds and is uniform with respect to $t$. We interpret the relation $o_{\mathcal{D}^{\prime}}(\epsilon^{\alpha})$ in a similar manner.\\
\textbf{Definition $3.1$ : }(see \cite{k1,k2,p1})
A pair of smooth complex-valued (real-valued) functions $(u(x,t,\epsilon),\sigma(x,t,\epsilon))$ is called a \textit{weak asymptotic solution}
of the system \eqref{e1.1} with the initial data $(u(x,0),\sigma(x,0))$ if
\begin{equation}
 \begin{aligned}
  &u_{t}(x,t,\epsilon)+u(x,t,\epsilon)u_{x}(x,t,\epsilon)-\sigma_{x}(x,t,\epsilon)=o_{\mathcal{D}^{\prime}}(1), \\
  &\sigma_{t}(x,t,\epsilon)+u(x,t,\epsilon)\sigma_{x}(x,t,\epsilon)=o_{\mathcal{D}^{\prime}}(1),\\
  &u(x,0,\epsilon)-u(x,0)=o_{\mathcal{D}^{\prime}}(1),\\
  &\sigma(x,0,\epsilon)-\sigma(x,0)=o_{\mathcal{D}^{\prime}}(1),\ \epsilon \rightarrow 0.
   \end{aligned}
 \label{e3.1}
\end{equation}
%\end{Definition}
%
%
Next we introduce the notion of \textit{generalised delta-shock wave type solution} in the context of the system \eqref{e1.1}.
In the context of system of conservation laws, this notion (in its integral form) was introduced in \cite{d4}.\\ 
\textbf{Definition $3.2$ :}
Let $u \in BV(\mathbb{R}\times(0,\infty);\mathbb{R})$ and $$\sigma(x,t)=\tilde \sigma(x,t)+e(x,t)\delta(x-\phi(t)),$$ where 
$\tilde \sigma \in BV(\mathbb{R} \times (0,\infty);\mathbb{R})$ and $e$ is a smooth function. The pair $(u,\sigma)$ is called a \textit{generalised delta-shock wave type}
solution of \eqref{e1.1} with initial conditions $u(0)$ and $\sigma(0)=\sigma_{0}+\sigma_{1}H(-x)$, if the following integral identities hold for
all $\theta(x,t)\in \mathcal{D}(\mathbb{R} \times [0,\infty))$:
\begin{equation}
\begin{aligned}
\int_{0}^{\infty} \int_{\mathbb{R}} (u \theta_{t}+(\frac{u^2}{2}-\tilde \sigma)\theta_{x})\ dx\ dt+\int_{\mathbb{R}}u(0)\theta(x,0)\ dx&=0,\\
\int_{0}^{\infty} \int_{\mathbb{R}} (\tilde \sigma_{t}+\hat u \tilde \sigma_{x})\theta \ dx \ dt-\int_{\Gamma}e(x,t)\frac{\partial \theta(x,t)}{\partial l}\ dl &=0.
\end{aligned}
 \label{e3.2}
\end{equation}
Here $\sigma_{0},\sigma_{1}$ are constants, $\hat u(x)=\int_{0}^{1}(u(x-)+t(u(x+)-u(x-)))dt$ and $\frac{\partial \theta(x,t)}{\partial l}$ is the tangential derivative on the graph
$\Gamma=\{(x,t):x=\phi(t)\}$.
 
%\end{definition}

\begin{remark}
 It would be important to note here that in the second identity in \eqref{e3.2}, the first integral is the weak formulation
using the Volpert's product. We have already seen that the Volpert's product isn't sufficient to get solution for the system. Therefore 
we augment it by allowing a delta-term in the form $\int_{\Gamma}e(x,t)\frac{\partial \theta(x,t)}{\partial l}\ dl $.
\end{remark}

\begin{remark}
 We can also rewrite the second identity in \eqref{e3.2} in the form $$\int_{0}^{\infty} \int_{\mathbb{R}} \tilde \sigma \theta_{t}\ dx \ dt
- \int_{0}^{\infty} \int_{\mathbb{R}} \hat u \tilde \sigma_{x} \theta \ dx \ dt + \int_{\Gamma} e(x,t) \frac{\partial \theta(x,t)}{\partial l}\ dl 
+ \int_{\mathbb{R}} \sigma(0) \theta(x,0)\ dx=0.$$
\end{remark}

Since we are interested in the solution of the Riemann problem, let us consider initial data of the form 
\begin{equation}
 \begin{aligned}
  &u(x,0)=u_{0}+u_{1}H(-x),\\
  &\sigma(x,0)=\sigma_{0}+\sigma_{1}H(-x),
 \end{aligned}
 \label{e3.3}
 \end{equation}
where $u_{0},u_{1},\sigma_{0},\sigma_{1}$ are constants. We propose a singular ansatz to \eqref{e1.1},\eqref{e3.3} of the form
\begin{equation}
 \begin{aligned}
  &u(x,t)=u_{0}+u_{1}H(-x+\phi(t)),\\
  &\sigma(x,t)=\sigma_{0}+\sigma_{1}H(-x+\phi(t))+e(t)\delta(x-\phi(t)),
 \end{aligned}
\label{e3.4}
\end{equation}
where $e(t)$ is a smooth function to be determined. In order to apply the method of weak asymptotics, we start with appropriate regularizations
$H_{u},H_{\sigma}$ of the Heaviside function and $\delta(.,\epsilon)$ of the delta distribution and choose proper \textit{correction} terms $R_{u},R_{\sigma}$ so as to propose a smooth ansatz for the weak asymptotic 
solution in the following form
\begin{equation}
 \begin{aligned}
 & u(x,t,\epsilon)=u_{0}+u_{1}H_{u}(-x+\phi(t),\epsilon)+R_{u}(x,t,\epsilon),\\
 & \sigma(x,t,\epsilon)=\sigma_{0}+\sigma_{1}H_{\sigma}(-x+\phi(t),\epsilon)+e(t)\delta(x-\phi(t),\epsilon)+R_{\sigma}(x,t,\epsilon).
 \end{aligned}
 \label{e3.5}
\end{equation}
The correction terms $R_{u},R_{\sigma}$ satisfy the conditions
 $$R_{i}(x,t,\epsilon)=o_{\mathcal{D}^{\prime}}(1),\  (R_{i})_{t}(x,t,\epsilon)=o_{\mathcal{D}^{\prime}}(1),\ 
\epsilon \rightarrow 0,\ i=u,\sigma.$$
We then substitute the smooth ansatz in the left-hand side of the system \eqref{e1.1} and determine $\phi(t),e(t)$ so that the smooth ansatz 
forms a weak asymptotic solution of \eqref{e1.1},\eqref{e3.3}. Then by passing to the limit, we show that the singular ansatz \eqref{e3.4}
indeed satisfies \eqref{e3.2}.

\subsection{Generalised Rankine-Hugoniot conditions from the integral identities}
We now derive the \textit{generalised Rankine-Hugoniot conditions} satisfied by a generalised delta-shock wave type solution for the system
\eqref{e1.1}. Henceforth we use the convention that across a discontinuity, the jump $[v]$ of a function is given by $[v]=v_{L}-v_{R}$,
 where $v_{L},v_{R}$ denote respectively the left and right values across the discontinuity.

\begin{theorem}
 Let $\Omega \subset \mathbb{R} \times (0,\infty)$ be a domain in $\mathbb{R}^2$ and let $\Gamma=\{(x,t):x=\phi(t) \}$ be a smooth curve that divides 
$\Omega$ into the two halves $\Omega_{-} = \{(x,t): x-\phi(t)<0 \}$ and $\Omega_{+} =\{(x,t): x-\phi(t)>0 \} $. Let $(u,\sigma)$ be a
generalised delta-shock wave type solution \eqref{e3.2} of \eqref{e1.1} where $\sigma$ is of the form 
$$\sigma(x,t)=\sigma_{0}+\sigma_{1}H(-x+\phi(t))+e(x,t)\delta(x-\phi(t)),$$ where $\sigma_{0},\sigma_{1}$ are constants.
Then $(u,\sigma)$ satisfies the following 
generalised Rankine-Hugoniot conditions for delta-shocks along the discontinuity curve $\Gamma:$
\begin{equation}
\dot \phi(t)=\frac{[\frac{u^2}{2}]-[\sigma]}{[u]},\ \ 
\dot e(t) =\frac{[\sigma]^2}{[u]}.
 \label{e3.6}
\end{equation}
\end{theorem}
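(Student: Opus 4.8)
The plan is to extract the two relations in \eqref{e3.6} separately from the two integral identities in \eqref{e3.2}, exploiting the explicit piecewise-constant-plus-delta structure of the solution. Writing $u_{L}=u_{0}+u_{1}$, $u_{R}=u_{0}$, $\sigma_{L}=\sigma_{0}+\sigma_{1}$, $\sigma_{R}=\sigma_{0}$, the convention $[v]=v_{L}-v_{R}$ gives $[u]=u_{1}$ and $[\sigma]=\sigma_{1}$; note that the regular part $\tilde\sigma=\sigma_{0}+\sigma_{1}H(-x+\phi(t))$ carries the entire jump of $\sigma$, while the singular part $e\,\delta$ is supported on $\Gamma$. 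I restrict throughout to test functions $\theta$ supported in $t>0$, so that initial-time boundary contributions drop out and the conditions are obtained pointwise in $t$.

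For the first relation I would read the first identity in \eqref{e3.2} as the weak formulation of the scalar conservation law $u_{t}+(\tfrac{u^{2}}{2}-\tilde\sigma)_{x}=0$. Since neither $u$ nor $\tilde\sigma$ contains a delta, this is a genuine $BV$ weak formulation, so splitting $\Omega$ into $\Omega_{-}$ and $\Omega_{+}$ and integrating by parts against $\theta$ reduces the interior contributions to a single line integral along $\Gamma$ carrying the usual flux jump. As the conserved quantity is $u$ and the flux is $\tfrac{u^{2}}{2}-\tilde\sigma$, these boundary terms vanish for all $\theta$ exactly when $\dot\phi\,[u]=[\tfrac{u^{2}}{2}]-[\tilde\sigma]=[\tfrac{u^{2}}{2}]-[\sigma]$, which is the first relation in \eqref{e3.6}.

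For the second relation I would substitute $\tilde\sigma=\sigma_{0}+\sigma_{1}H(-x+\phi(t))$ into the second identity and compute the distributional derivatives $\tilde\sigma_{t}=\sigma_{1}\dot\phi\,\delta(x-\phi(t))$ and $\tilde\sigma_{x}=-\sigma_{1}\,\delta(x-\phi(t))$. Evaluating the Volpert average at the jump yields $\hat u=\tfrac12(u_{L}+u_{R})$, so the nonconservative product collapses to
\[
\tilde\sigma_{t}+\hat u\,\tilde\sigma_{x}=\sigma_{1}\Bigl(\dot\phi-\tfrac{u_{L}+u_{R}}{2}\Bigr)\delta(x-\phi(t)),
\]
and the first integral becomes $\int_{0}^{\infty}\sigma_{1}(\dot\phi-\tfrac{u_{L}+u_{R}}{2})\,\theta(\phi(t),t)\,dt$. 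For the delta term I would parametrize $\Gamma$ by $t\mapsto(\phi(t),t)$, so that $dl=\sqrt{1+\dot\phi^{2}}\,dt$ while the tangential derivative is $\partial_{l}\theta=(1+\dot\phi^{2})^{-1/2}\tfrac{d}{dt}\theta(\phi(t),t)$; the arc-length factor cancels the normalization, and integrating by parts in $t$ turns $\int_{\Gamma}e\,\partial_{l}\theta\,dl$ into $-\int_{0}^{\infty}\dot e(t)\,\theta(\phi(t),t)\,dt$.

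Equating the two contributions and using that $\theta(\phi(t),t)$ is arbitrary along $\Gamma$ gives $\dot e=\sigma_{1}(\tfrac{u_{L}+u_{R}}{2}-\dot\phi)$. The closing step is algebraic: since $[\tfrac{u^{2}}{2}]=\tfrac12(u_{L}^{2}-u_{R}^{2})=[u]\cdot\tfrac{u_{L}+u_{R}}{2}$, the first relation already established yields $\tfrac{u_{L}+u_{R}}{2}-\dot\phi=[\sigma]/[u]$, whence $\dot e=\sigma_{1}\,[\sigma]/[u]=[\sigma]^{2}/[u]$. I expect the main obstacle to be the careful treatment of the nonconservative product and the line integral — specifically, justifying that $\hat u\,\tilde\sigma_{x}$ is precisely the Volpert-averaged delta and correctly matching the arc-length measure $dl$ against the $(1+\dot\phi^{2})^{-1/2}$ in the tangential derivative, since any stray sign or factor here would corrupt the coefficient of $\dot e$ and break the final cancellation.
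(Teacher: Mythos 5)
Your proposal is correct and follows essentially the same route as the paper: the speed $\dot\phi$ comes from the standard jump argument in the first identity, and $\dot e$ comes from substituting the explicit $\tilde\sigma$ into the second identity, using $\hat u=\tfrac12(u_L+u_R)$ on $\Gamma$, rewriting $\int_\Gamma e\,\partial_l\theta\,dl$ as $\int_0^\infty e\,\tfrac{d}{dt}\theta(\phi(t),t)\,dt$, and integrating by parts in $t$. Your treatment is in fact slightly more careful than the paper's on two points it leaves implicit — the cancellation of the arc-length factor $\sqrt{1+\dot\phi^2}$ against the normalization in $\partial_l\theta$, and the restriction to test functions supported in $t>0$ to kill the boundary term at $t=0$.
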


\begin{proof}
The condition on $\phi(t)$ follows by a standard argument using integration by parts in the first identity in \eqref{e3.2}. \\
Next we would like to note that on the curve $\Gamma$, $e$ can be considered as a function of the single variable $t$:  $e(t)=e(\phi(t),t)$ and
that $$\int_{\Gamma} e(x,t) \frac{\partial \theta(x,t)}{\partial l} dl=\int_{0}^{\infty} e(t) \frac{d \theta(\phi(t),t)}{dt} dt,$$
where $\frac{d \theta(x,t)}{dt}=\theta_{t}(x,t)+\dot \phi(t) \theta_{x}(x,t)$.

Now $\tilde \sigma_{x}=(\sigma_{0}+\sigma_{1}H(-x+\phi(t)))_{x}=-\sigma_{1} \delta(-x+\phi(t))$
and $$\tilde \sigma_{t}=(\sigma_{0}+\sigma_{1}H(-x+\phi(t)))_{t}= \sigma_{1} \dot \phi(t) \delta(-x+\phi(t)).$$
Therefore using the fact that $\hat u(x)=(u_{0}+\frac{u_{1}}{2})$ on $\Gamma$, we get 
$$\int_{0}^{\infty} \int_{\mathbb{R}}(\tilde \sigma_{t}+\hat u \tilde \sigma_{x})\theta \ dx \ dt= \int_{0}^{\infty} (\sigma_{1}
\dot \phi(t)-\sigma_{1}(u_{0}+\frac{u_{1}}{2}))\theta(\phi(t),t)) \ dt.$$
The second identity in \eqref{e3.2} thus becomes 
\begin{equation}
\begin{aligned}
0&=\int_{0}^{\infty} \int_{\mathbb{R}}(\tilde \sigma_{t}+\hat u \tilde \sigma_{x})\theta \ dx \ dt - \int_{0}^{\infty} e(t) \frac{d \theta(\phi(t),t)}{dt}  \ dt \\
&= \int_{0}^{\infty} (\sigma_{1} \dot \phi(t)-\sigma_{1}(u_{0}+\frac{u_{1}}{2})+\frac{de(\phi(t),t)}{dt})\theta(\phi(t),t)) \ dt.
\end{aligned}
\notag
\end{equation}
Since the above identity is satisfied for all $\theta \in \mathcal{D}(\mathbb{R} \times [0,\infty)) $, we have 
$$\sigma_{1} \dot \phi(t)-\sigma_{1}(u_{0}+\frac{u_{1}}{2})+\frac{de(\phi(t),t)}{dt}=0 $$
whereby it follows (using the condition on $\phi(t)$ already obtained) that
$$\dot e(t):=\frac{d e(\phi(t),t)}{dt}=\sigma_{1}(u_{0}+\frac{u_{1}}{2})-\sigma_{1}(u_{0}+\frac{u_{1}}{2}-\frac{\sigma_{1}}{u_{1}})=\frac{\sigma_{1}^2}{u_{1}}=\frac{[\sigma]^2}{[u]} .$$ 
Thus we obtain the generalised Rankine-Hugoniot conditions in the form \eqref{e3.6}.
\end{proof}

\subsection{Existence of generalised delta-shock wave type solution}
Next we quickly take a look at the regularizations and correction terms to be chosen in our case (see \cite{p1}).\\
Let $\omega:\mathbb{R}\rightarrow \mathbb{R}$ be a non-negative, smooth, even function with support in $(-1,1)$ and satisfying
$$\int_{\mathbb{R}}\omega(x)dx=1.$$
Let $\omega_{0}=\int_{\mathbb{R}}\omega^{2}(x)dx$ and let 
\begin{equation}
 R(x,t,\epsilon)=\frac{1}{\sqrt{\epsilon}}\omega(\frac{x-2\epsilon}{\epsilon}),\ \
\delta(x,\epsilon)=\frac{1}{\epsilon}\omega(\frac{x+2\epsilon}{\epsilon}).
\label{e3.7}
\end{equation}
We take $R_{u}=p(t)R(.,\epsilon)$ and $R_{\sigma}=0$, where $p(t)$ is a smooth function to be determined.
Further we take $H_{u}(x,\epsilon)=H_{\sigma}(x,\epsilon)=H(x,\epsilon)$ where $H(x,\epsilon)$ is defined as
\begin{equation}
H(x,\epsilon)=\begin{cases} 0,\ \ x\leq -4\epsilon\\
                           c,\ \ -3\epsilon\leq x \leq 3\epsilon\\
                           1,\ \ x\geq 4\epsilon
                           \end{cases}
\label{e3.8}
\end{equation}
and is continued smoothly in the regions $(-4\epsilon,-3\epsilon)$ and $(3\epsilon,4\epsilon)$. Here we take 
the constant $c=(\frac{1}{2}-\frac{\sigma_{1}}{u_{1}^2}).$
Thus our smooth ansatz takes the form
\begin{equation}
 \begin{aligned}
 & u(x,t,\epsilon)=u_{0}+u_{1}H(-x+\phi(t),\epsilon)+p(t)R(x-\phi(t),\epsilon),\\
 & \sigma(x,t,\epsilon)=\sigma_{0}+\sigma_{1}H(-x+\phi(t),\epsilon)+e(t)\delta(x-\phi(t),\epsilon).
 \end{aligned}
 \label{e3.9}
\end{equation}
We then have the asymptotics given by \\
\textbf{Lemma :}(see \cite{p1})
Choosing the regularizations and corrections as described above, we have the following weak
asymptotic expansions:
\begin{equation}
 \begin{aligned}
 &R(x,\epsilon)=o_{\mathcal{D}^{\prime}}(1),\ \ R_{x}(x,\epsilon)=o_{\mathcal{D}^{\prime}}(1),\\
 &R^{2}(x,\epsilon)=\omega_{0}\delta(x)+o_{\mathcal{D}^{\prime}}(1),\\
 &R(x,\epsilon) R_{x}(x,\epsilon)=\frac{1}{2}\omega_{0}\delta^{\prime}(x)+o_{\mathcal{D}^{\prime}}(1),\\
 &\delta(x,\epsilon)=\delta(x)+o_{\mathcal{D}^{\prime}}(1),\ \ \delta_{x}(x,\epsilon)=\delta^{\prime}(x)+o_{\mathcal{D}^{\prime}}(1),\\
 &R(x,\epsilon)\delta(x,\epsilon)=0,\ R(x,\epsilon) \delta_{x}(x,\epsilon)=0,\\
 &H(x,\epsilon)=H(x)+o_{\mathcal{D}^{\prime}}(1),\  H_{x}(x,\epsilon)=\delta(x)+o_{\mathcal{D}^{\prime}}(1),\\ 
 &H(x,\epsilon) H_{x}(x,\epsilon)=\frac{1}{2}\delta(x)+o_{\mathcal{D}^{\prime}}(1),\\
 &H(x,\epsilon) R_{x}(-x,\epsilon)=o_{\mathcal{D}^{\prime}}(1),\ R(-x,\epsilon) H_{x}(x,\epsilon)=o_{\mathcal{D}^{\prime}}(1),\\
 &H(x,\epsilon) \delta_{x}(-x,\epsilon)=c\delta^{\prime}(-x)+o_{\mathcal{D}^{\prime}}(1),\ \epsilon \rightarrow 0.
 \end{aligned}
\notag
 \end{equation}
%\end{lemma}

The next theorem gives us the existence of a weak asymptotic solution of the system \eqref{e1.1} in the form \eqref{e3.9}.
\begin{theorem}(see \cite{p1})
 For $t\in [0,\infty)$, the Cauchy problem \eqref{e1.1},\eqref{e3.3} has a weak asymptotic solution \eqref{e3.9} with $\phi(t),e(t)\ and\ p(t)$
given by the relations 
\begin{equation}
\begin{aligned}
&\dot{\phi}(t)=\frac{[\frac{u^{2}}{2}]-[\sigma]}{[u]},\  \dot{e}(t)=\frac{[\sigma]^{2}}{[u]},\\
&\frac{1}{2}p^{2}(t)\omega_{0}-e(t)=0.
\end{aligned}
\label{e3.10}
\end{equation}
\end{theorem}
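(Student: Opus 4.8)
The plan is to substitute the smooth ansatz \eqref{e3.9} directly into the left-hand sides of the two equations of \eqref{e1.1}, carry out the differentiations, and then replace every nonlinear product that appears by its weak limit as listed in the Lemma. Since each such product is, modulo $o_{\mathcal{D}^{\prime}}(1)$, a linear combination of the singular distributions $\delta(x-\phi(t))$ and $\delta^{\prime}(x-\phi(t))$, the requirement that each equation hold up to $o_{\mathcal{D}^{\prime}}(1)$ reduces to demanding that the coefficient of $\delta$ and the coefficient of $\delta^{\prime}$ in each equation vanish. I expect these four scalar conditions to reproduce exactly the relations \eqref{e3.10}. The initial conditions in \eqref{e3.1} are handled separately by taking $\phi(0)=0$: since $\dot e$ turns out constant one has $e(0)=0$, and then from $\tfrac12 p^2\omega_0=e$ also $p(0)=0$, so at $t=0$ both the correction $pR$ and the concentration $e\,\delta(\cdot,\epsilon)$ vanish identically and \eqref{e3.9} reduces weakly to \eqref{e3.3}.

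For the first equation I would compute $u_t$, $uu_x$ and $\sigma_x$ from \eqref{e3.9}, keeping in mind that $H$ is evaluated at $-x+\phi(t)$ whereas $R$ and $\delta$ are evaluated at $x-\phi(t)$. Because of this opposite shift the mixed products $H\,R_x$ and $R\,H_x$ have asymptotically disjoint supports and drop out as $o_{\mathcal{D}^{\prime}}(1)$, while the Lemma supplies $H\,H_x\to\tfrac12\delta$ and $R\,R_x\to\tfrac12\omega_0\delta^{\prime}$. Collecting the coefficient of $\delta(x-\phi)$ gives $u_1\dot\phi-u_0u_1-\tfrac12u_1^2+\sigma_1$, and setting it to zero yields $\dot\phi=u_0+\tfrac12u_1-\sigma_1/u_1$, which on rewriting with $[u]=u_1$, $[\tfrac{u^2}{2}]=u_0u_1+\tfrac12u_1^2$ and $[\sigma]=\sigma_1$ is precisely the first relation in \eqref{e3.10}. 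The coefficient of $\delta^{\prime}(x-\phi)$ comes only from the $p^2R\,R_x$ term of $uu_x$ and from $-e\,\delta_x$ in $-\sigma_x$, and equals $\tfrac12 p^2\omega_0-e$; its vanishing is the third relation in \eqref{e3.10}, and it shows why the correction $R_u=pR$ was introduced at all, since it is the only mechanism available to cancel the $\delta^{\prime}$ generated by the stress gradient.

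For the second equation I would likewise compute $\sigma_t$ and $u\sigma_x$. The coefficient of $\delta(x-\phi)$ is $\dot e+\sigma_1\dot\phi-u_0\sigma_1-\tfrac12u_1\sigma_1$; substituting the value of $\dot\phi$ already found collapses this to $\dot e-\sigma_1^2/u_1$, so its vanishing gives $\dot e=\sigma_1^2/u_1=[\sigma]^2/[u]$, the second relation in \eqref{e3.10}. The step I expect to be the crux is the coefficient of $\delta^{\prime}(x-\phi)$: it arises from $-e\dot\phi\,\delta_x$ in $\sigma_t$ together with $u_0e\,\delta_x$ and $u_1e\,H\,\delta_x$ in $u\sigma_x$, and using $H\,\delta_x\to c\,\delta^{\prime}$ from the Lemma it equals $e\bigl(-\dot\phi+u_0+u_1c\bigr)$. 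In contrast to the first equation, the stress equation carries no correction term ($R_\sigma=0$), so this coefficient cannot be removed by an added contribution and must instead vanish identically; this happens precisely when $u_1c=\dot\phi-u_0=\tfrac12u_1-\sigma_1/u_1$, that is, when $c=\tfrac12-\sigma_1/u_1^2$, exactly the value fixed in \eqref{e3.8}. Thus the tuned flat level of the Heaviside regularization is what makes the construction close up. Once these coefficient identities are in place, the remaining terms are manifestly $o_{\mathcal{D}^{\prime}}(1)$ by the Lemma, and with the initial-data check the theorem follows; the main obstacle is therefore not a single hard estimate but the careful, sign- and support-sensitive bookkeeping of the products, culminating in the recognition that the value of $c$ is forced by the $\delta^{\prime}$ balance in the stress equation.
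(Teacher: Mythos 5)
Your proposal is correct and follows exactly the computation the paper intends: the paper itself defers this proof to \cite{p1}, but all the machinery it reproduces (the Lemma's expansions, the regularizations \eqref{e3.7}--\eqref{e3.8}, and the tuned constant $c=\tfrac12-\sigma_1/u_1^2$) exists precisely to support the substitute-and-collect-coefficients argument you give, including your key observation that the $\delta'$ balance in the stress equation forces the value of $c$ while the $\delta'$ balance in the velocity equation forces $\tfrac12 p^2\omega_0=e$. The only quibble is the phrasing ``since $\dot e$ turns out constant one has $e(0)=0$'' --- constancy of $\dot e$ does not imply $e(0)=0$; rather $e(0)=0$ (hence $p(0)=0$) must be imposed so that the initial data in \eqref{e3.1} are matched, which is what yields $e(t)=\frac{[\sigma]^2}{[u]}t$.
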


\begin{remark}
 It would be interesting to note that the expressions for $\dot{\phi}(t)$ and $\dot{e}(t)$ are the generalised Rankine-Hugoniot conditions derived before.
\end{remark}

Finally, we then have the following desired result
\begin{theorem}
 For $t\in [0,\infty)$, the Cauchy problem \eqref{e1.1},\eqref{e3.3} has a generalised delta-shock wave type solution of the form \eqref{e3.4} with $\phi(t)\ and \ e(t)$
given by the relations 
\begin{equation}
\phi(t)=\frac{[\frac{u^{2}}{2}]-[\sigma]}{[u]}t,\  e(t)=\frac{[\sigma]^2}{[u]}t.
\label{e3.11}
\end{equation}
\end{theorem}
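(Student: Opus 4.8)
The plan is to upgrade the weak asymptotic solution supplied by Theorem 3.3 into a genuine generalised delta-shock wave type solution by passing to the distributional limit $\epsilon \to 0$ and then checking the two integral identities \eqref{e3.2}. First I would integrate the relations \eqref{e3.10}. Because the data $u_{0},u_{1},\sigma_{0},\sigma_{1}$ are constants, the jumps $[u]=u_{1}$, $[\sigma]=\sigma_{1}$ and $[\frac{u^{2}}{2}]$ are constants, so the right-hand sides of the first two relations in \eqref{e3.10} are constant. Integrating with the initial conditions $\phi(0)=0$ and $e(0)=0$ (the delta starts at the origin with zero mass) produces exactly the affine expressions \eqref{e3.11}, and the third relation fixes the correction amplitude through $\tfrac12 p^{2}(t)\omega_{0}=e(t)$.

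Next I would take the limit in the smooth ansatz \eqref{e3.9}. Using the Lemma ($H(\cdot,\epsilon)\to H$, $R(\cdot,\epsilon)=o_{\mathcal{D}^{\prime}}(1)$, $\delta(\cdot,\epsilon)\to\delta$), the pair $(u(x,t,\epsilon),\sigma(x,t,\epsilon))$ converges in $\mathcal{D}^{\prime}$ to the singular ansatz \eqref{e3.4} with the $\phi,e$ just found. To obtain the first identity I would test the conservative equation $u_{t}+uu_{x}-\sigma_{x}=o_{\mathcal{D}^{\prime}}(1)$ against $\theta$, integrate by parts in $x$ and $t$ (the $t=0$ boundary term yielding the initial-data integral), writing the flux as $\frac{u^{2}}{2}-\sigma$. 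The decisive point is the tuning of the correction: since $R^{2}=\omega_{0}\delta+o_{\mathcal{D}^{\prime}}(1)$, the square $\frac{u^{2}}{2}$ develops a concentration $\tfrac12 p^{2}\omega_{0}\,\delta=e\,\delta$, which cancels exactly against the delta carried by $\sigma$. Hence $\frac{u^{2}}{2}-\sigma\to\frac{u^{2}}{2}-\tilde\sigma$ in the limit, and the first identity in \eqref{e3.2} falls out; equivalently, it is just the classical Rankine-Hugoniot relation $\dot\phi\,[u]=[\frac{u^{2}}{2}]-[\sigma]$ for the scalar conservation law with flux $\frac{u^{2}}{2}-\tilde\sigma$.

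For the second identity I would test the nonconservative equation $\sigma_{t}+u\sigma_{x}=o_{\mathcal{D}^{\prime}}(1)$ against $\theta$ and expand the product $u\sigma_{x}$ termwise. Here $\sigma_{x}$ contributes both a $\sigma_{1}H_{x}$ part (converging to a delta along $\Gamma$) and an $e\delta_{x}$ part (converging to $\delta^{\prime}$), while $u$ carries the jump $u_{1}H$ together with the correction. The products $H(\cdot,\epsilon)H_{x}$, $H(\cdot,\epsilon)\delta_{x}$ and $R\,\delta_{x}$ are governed precisely by the expansions in the Lemma, and the constant $c=\frac12-\frac{\sigma_{1}}{u_{1}^{2}}$ is chosen so that the surviving regular part assembles into the Volpert average $\hat u\,\tilde\sigma_{x}$ (with $\hat u=u_{0}+\frac{u_{1}}{2}$ on $\Gamma$) while the concentrated part reproduces $-\int_{\Gamma}e\,\frac{\partial\theta}{\partial l}\,dl$. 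Matching the coefficient of $\theta(\phi(t),t)$ against the generalised Rankine-Hugoniot relation $\dot e=[\sigma]^{2}/[u]$ already recorded in \eqref{e3.6} then closes the identity.

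The step I expect to be the main obstacle is this second identity: controlling the nonconservative product $u\sigma_{x}$ in the limit is delicate because it pairs a discontinuity of $u$ with the singular derivative $\delta_{x}\to\delta^{\prime}$ of $\sigma$. One must keep the two regularizations $R$ and $\delta(\cdot,\epsilon)$ supported on disjoint sides of $\phi(t)$ so that $R\,\delta(\cdot,\epsilon)=R\,\delta_{x}(\cdot,\epsilon)=0$ exactly, as in the Lemma, and then verify that all off-diagonal cross terms are $o_{\mathcal{D}^{\prime}}(1)$; only with this separation do the concentrations organise themselves cleanly into the Volpert product plus the tangential-derivative term on $\Gamma$. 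By contrast, the first identity is routine once the cancellation $\tfrac12 p^{2}\omega_{0}=e$ is in hand, and the determination of $\phi$ and $e$ is immediate.
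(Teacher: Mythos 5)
Your proposal is correct and follows essentially the same route as the paper's proof: integrate the relations \eqref{e3.10} to get \eqref{e3.11}, then pass to the limit $\epsilon \to 0$ in the weak asymptotic solution \eqref{e3.9}, using the cancellation $\tfrac12 p^{2}(t)\omega_{0}-e(t)=0$ so that $\frac{u^{2}}{2}-\sigma \to \frac{u^{2}}{2}-\tilde\sigma$ for the first identity, and the termwise expansion of $\sigma_{t}+u\sigma_{x}$ via the Lemma (with the disjoint supports of $R$ and $\delta(\cdot,\epsilon)$) for the second. The only slight imprecision is your attribution of the role of $c$: the Volpert average $\hat u = u_{0}+\frac{u_{1}}{2}$ comes from $H(\cdot,\epsilon)H_{x}(\cdot,\epsilon)=\frac12\delta+o_{\mathcal{D}^{\prime}}(1)$ independently of $c$, whereas $c$ enters only the concentrated part through $H(x,\epsilon)\delta_{x}(-x,\epsilon)=c\delta^{\prime}(-x)+o_{\mathcal{D}^{\prime}}(1)$, ensuring $u_{0}+cu_{1}=\dot\phi(t)$ so that this part combines with the $e\delta$ time-derivative term into $-\int_{\Gamma}e\,\frac{\partial\theta}{\partial l}\,dl$.
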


\begin{proof}
 It is sufficient to show that as $\epsilon \rightarrow 0$, the weak asymptotic solution \eqref{e3.9} with $\phi(t),e(t)$ given by the relations \eqref{e3.11}
satisfy the integral identities \eqref{e3.2}. Since $(u(x,t,\epsilon),\sigma(x,t,\epsilon))$ is a weak asymptotic solution of the system 
\eqref{e1.1} with initial data \eqref{e3.3}, it follows that for every $\theta \in \mathcal{D}(\mathbb{R} \times [0,\infty))$,
\begin{equation}
 \begin{aligned}
&\int_{0}^{\infty} \int_{\mathbb{R}} (u_{t}(x,t,\epsilon)+u(x,t,\epsilon)u_{x}(x,t,\epsilon)-\sigma_{x}(x,t,\epsilon)) \theta(x,t)\ dx \ dt \rightarrow 0,\\
&\int_{0}^{\infty} \int_{\mathbb{R}} (\sigma_{t}(x,t,\epsilon)+u(x,t,\epsilon)\sigma_{x}(x,t,\epsilon)) \theta(x,t)\ dx \ dt \rightarrow 0,\\
&\int_{\mathbb{R}} (u(x,0,\epsilon) -u(x,0))\theta(x,0) dx \rightarrow 0,\\
&\int_{\mathbb{R}} (\sigma(x,0,\epsilon)-\sigma(x,0))\theta(x,0) dx \rightarrow 0,\  \text{as}\  \epsilon \rightarrow 0.
 \end{aligned}
\label{e3.12}
\end{equation}
Now,
\begin{equation}
 \begin{aligned}
  &\int_{0}^{\infty} \int_{\mathbb{R}} (u_{t}(x,t,\epsilon)+u(x,t,\epsilon)u_{x}(x,t,\epsilon)-\sigma_{x}(x,t,\epsilon))\theta(x,t)\ dx \ dt \\
  &=-\int_{0}^{\infty} \int_{\mathbb{R}} u(x,t,\epsilon) \theta_{t}(x,t)\ dx \ dt\\ 
&-\int_{0}^{\infty} \int_{\mathbb{R}} (\frac{u(x,t,\epsilon)^2}{2}-\sigma(x,t,\epsilon))\theta_{x}(x,t)\ dx \ dt-\int_{\mathbb{R}}u(x,0,\epsilon) \theta(x,0)\ dx.
 \end{aligned}
\label{e3.13}
\end{equation}
But 
\begin{equation}
 \begin{aligned}
 \frac{u^2(x,t,\epsilon)}{2}-\sigma(x,t,\epsilon)&=\frac{1}{2}(u_{0}^2+u_{1}^2 H^2(-x+\phi(t),\epsilon)+p^2(t) R^2(x-\phi(t),\epsilon) \\ 
&\ \ \ \ \ +2 u_{0} u_{1} H(-x+\phi(t),\epsilon)+2 u_{0} p(t) R(x-\phi(t),\epsilon)\\ 
&\ \ \ \ \ +2 u_{1} p(t) H(-x+\phi(t),\epsilon) R(x-\phi(t),\epsilon)) -\sigma_{0} \\ 
&\ \ \ \ \ -\sigma_{1}H(-x+\phi(t),\epsilon)-e(t) \delta(x-\phi(t),\epsilon)\\
&=(\frac{u_{0}^2}{2}-\sigma_{0})+(\frac{u_{1}^2}{2}+u_{0} u_{1} - \sigma_{1})H(-x+\phi(t),\epsilon) \\ 
&\ \ \ \ \ +\frac{1}{2}p^2(t)R^2(x-\phi(t),\epsilon) -e(t)\delta(x-\phi(t),\epsilon) \\ 
&\ \ \ \ \ +u_{0}p(t)R(x-\phi(t),\epsilon)\\ 
&\ \ \ \ \ +u_{1}p(t)H(-x+\phi(t),\epsilon)R(x-\phi(t),\epsilon)\\
&=(\frac{u_{0}^2}{2}-\sigma_{0})+(\frac{u_{1}^2}{2}+u_{0} u_{1} - \sigma_{1})H(-x+\phi(t)) \\ 
&\ \ \ \  +(\frac{1}{2}\omega_{0} p^2(t)-e(t))\delta(x-\phi(t))+ o_{\mathcal{D}^{\prime}}(1) \\
&=(\frac{u_{0}^2}{2}-\sigma_{0})+(\frac{u_{1}^2}{2}+u_{0} u_{1} - \sigma_{1})H(-x+\phi(t))+o_{\mathcal{D}^{\prime}}(1) \\
&=(\frac{u^2(x,t)}{2}-\tilde \sigma(x,t))+ o_{\mathcal{D}^{\prime}}(1).
 \end{aligned}
\notag
\end{equation}

Therefore passing to the limit as $\epsilon \rightarrow 0$ in \eqref{e3.13} and using \eqref{e3.10} and \eqref{e3.12}, we get
\begin{equation}
0=\int_{0}^{\infty} \int_{\mathbb{R}} (u \theta_{t}+ (\frac{u^2}{2}-\tilde \sigma) \theta_{x})\ dx \ dt + \int_{\mathbb{R}} u(x,0) \theta(x,0)\ dx.
 \label{e3.14}
\end{equation}
 
Next, we observe that
\begin{equation}
 \begin{aligned}
  &\int_{0}^{\infty} \int_{\mathbb{R}} u(x,t,\epsilon) \sigma_{x}(x,t,\epsilon) \theta(x,t)\ dx \ dt \\ 
&\ \ \ \ \ \ \ \ =\int_{0}^{\infty} \int_{\mathbb{R}} u(x,t,\epsilon) (\tilde \sigma(x,t,\epsilon)+e(x,t) \delta(x-\phi(t),\epsilon))_{x} \theta(x,t) \ dx \ dt \\
&\ \ \ \ \ \ \ \ =\int_{0}^{\infty} \int_{\mathbb{R}} u(x,t,\epsilon) \tilde \sigma_{x}(x,t,\epsilon) \theta(x,t)\ dx \ dt \\ 
&\ \ \ \ \ \ \ \ \ \ \ \ \ +\int_{0}^{\infty} \int_{\mathbb{R}} u(x,t,\epsilon) (e(x,t) \delta(x-\phi(t),\epsilon))_{x} \theta(x,t) \ dx \ dt \\
&\ \ \ \ \ \ \ \ = A\ + \ B,
  \end{aligned}
\notag
\end{equation}
where $A=\int_{0}^{\infty} \int_{\mathbb{R}} u(x,t,\epsilon) \tilde \sigma_{x}(x,t,\epsilon) \theta(x,t)\ dx \ dt $, and 
$$B= \int_{0}^{\infty} \int_{\mathbb{R}} u(x,t,\epsilon) (e(x,t) \delta(x-\phi(t),\epsilon))_{x} \theta(x,t) \ dx \ dt. $$

Also, 
\begin{equation}
 \begin{aligned}
 \int_{0}^{\infty} \int_{\mathbb{R}} \sigma_{t}(x,t,\epsilon) \theta(x,t)\ dx \ dt &= \int_{0}^{\infty} \int_{\mathbb{R}} \tilde \sigma_{t}(x,t,\epsilon)
\theta(x,t)\ dx \ dt \\  
&\ \ \ \ \ \ \ \ \ \ \ \ \  + \int_{0}^{\infty} \int_{\mathbb{R}} (e(x,t) \delta(x-\phi(t),\epsilon))_{t} \theta(x,t)\ dx \ dt \\
 &= \int_{0}^{\infty} \int_{\mathbb{R}} \tilde \sigma_{t}(x,t) \theta(x,t)\ dx \ dt+\  C\  + o(1),  
 \end{aligned}
\notag
\end{equation}
where $C=\int_{0}^{\infty} \int_{\mathbb{R}} (e(x,t) \delta(x-\phi(t),\epsilon))_{t} \theta(x,t)\ dx \ dt.$\\
Therefore,
\begin{equation}
 \begin{aligned}
  \int_{0}^{\infty} \int_{\mathbb{R}}(\sigma_{t}(x,t,\epsilon)+u(x,t,\epsilon)\sigma_{x}(x,t,\epsilon))\theta(x,t)\ dx \ dt 
   &= \int_{0}^{\infty} \int_{\mathbb{R}} \tilde \sigma_{t}(x,t) \theta(x,t) \ dx \ dt \\ 
  &\ \ \ \ \ \ \ \ +\ A\ +\ B \ +\ C \ +o(1).
 \end{aligned}
\label{e3.15}
\end{equation}
Now,
\begin{equation}
 \begin{aligned}
  u(x,t,\epsilon) \tilde \sigma_{x}(x,t,\epsilon) &=(u_{0}+u_{1}H(-x+\phi(t),\epsilon) \\ 
  &\ \ \ \ \ \ +p(t)R(x-\phi(t),\epsilon))(-\sigma_{1} \frac{dH}{d\xi}(-x+\phi(t),\epsilon)) \\
  &= -u_{0}\sigma_{1} \frac{dH}{d \xi}(-x+\phi(t),\epsilon)-u_{1}\sigma_{1} H(-x+\phi(t),\epsilon)\frac{dH}{d \xi}(-x+\phi(t),\epsilon) \\
  &\ \ \ \ \ \ \ \ \ \ \ \ \ \ \ \ \ \ \ \ \ \ \ \ \ \ \ \ \ \ \ \ \ -\sigma_{1}p(t)\frac{dH}{d \xi}(-x+\phi(t),\epsilon)R(x-\phi(t),\epsilon) \\
  &=-(u_{0}\sigma_{1}+\frac{u_{1}\sigma_{1}}{2})\delta(x-\phi(t))+o_{\mathcal{D}^{\prime}}(1).
 \end{aligned}
\notag
\end{equation}
(Here $\frac{d}{d \xi}$ denotes differentiation with respect to the first variable.)\\
Hence 
\begin{equation}
 \begin{aligned}
 A =\int_{0}^{\infty} \int_{\mathbb{R}} u(x,t,\epsilon) \tilde \sigma_{x}(x,t,\epsilon) \theta(x,t) \ dx \ dt 
      &=-\int_{0}^{\infty} (u_{0}+\frac{u_{1}}{2})\sigma_{1} \theta(\phi(t),t)\ dt + o(1),\\
   &= \int_{0}^{\infty} \int_{\mathbb{R}} \hat u \tilde \sigma_{x}(x,t) \theta(x,t) \ dx \ dt+o(1).
 \end{aligned}
\notag
\end{equation}
The last step above follows from the fact that $\tilde \sigma$ is constant away from the curve $\Gamma$.\\
Again
\begin{equation}
 \begin{aligned}
  B &= \int_{0}^{\infty} \int_{\mathbb{R}} u(x,t,\epsilon) (e(x,t) \delta(x-\phi(t),\epsilon))_{x} \theta(x,t) \ dx \ dt \\
    &=\int_{0}^{\infty} \int_{\mathbb{R}} u_{0} (e(x,t) \delta(x-\phi(t),\epsilon))_{x} \theta(x,t) \ dx \ dt  \\
    & \ \ \ \ \ \ \ \ \ +\int_{0}^{\infty} \int_{\mathbb{R}} u_{1} H(-x+\phi(t),\epsilon) (e(x,t) \delta(x-\phi(t),\epsilon))_{x} \theta(x,t) \ dx \ dt \\
    &=-\int_{0}^{\infty} \int_{\mathbb{R}} u_{0} e(x,t) \delta(x-\phi(t),\epsilon) \theta_{x}(x,t) \ dx \ dt \\
    &  \ \ \ \ \ \ \ \ \ -\int_{0}^{\infty} \int_{\mathbb{R}} u_{1} H(-x+\phi(t),\epsilon) e(x,t) \delta(x-\phi(t),\epsilon) \theta_{x}(x,t) \ dx \ dt \\
    & \ \ \ \ \ \ \ \ \ \ +\int_{0}^{\infty} \int_{\mathbb{R}} u_{1} e(x,t) \delta(x-\phi(t),\epsilon) \frac{dH}{d \xi}(-x+\phi(t),\epsilon) \theta(x,t) \ dx \ dt \\
    &=-\int_{0}^{\infty} u_{0} e(t) \theta_{x}(\phi(t),t)\ dt - \int_{0}^{\infty} c u_{1} e(t) \theta_{x}(\phi(t),t) \ dt +\ 0 \ +o(1).
 \end{aligned}
\notag
\end{equation}
But $u_{0}+c u_{1}=u_{0}+(\frac{1}{2}-\frac{\sigma_{1}}{u_{1}^2})u_{1}= u_{0}+\frac{u_{1}}{2}-\frac{\sigma_{1}}{u_{1}}=\dot \phi(t)$.\\
Therefore $$B=-\int_{0}^{\infty} e(t) \dot \phi(t) \theta_{x}(\phi(t),t) \ dt+o(1). $$
Finally,
\begin{equation}
 \begin{aligned}
  C &= \int_{0}^{\infty} \int_{\mathbb{R}} (e(x,t) \delta(x-\phi(t),\epsilon))_{t} \theta(x,t) \ dx \ dt \\
    &= - \int_{0}^{\infty} \int_{\mathbb{R}} e(x,t) \delta(x-\phi(t),\epsilon) \theta_{t}(x,t) \ dx \ dt \\
    &= - \int_{0}^{\infty} e(t) \theta_{t}(\phi(t),t) dt + o(1).
 \end{aligned}
\notag
\end{equation}
Therefore, from \eqref{e3.15}, passing to the limit as $\epsilon \rightarrow 0$, we get 
$$0= \int_{0}^{\infty} \int_{\mathbb{R}} (\tilde \sigma_{t}(x,t) + \hat u \tilde \sigma_{x}(x,t)) \theta(x,t) \ dx \ dt - \int_{0}^{\infty} e(t) \frac{d \theta(\phi(t),t)}{dt} dt $$
which implies
\begin{equation}
\int_{0}^{\infty} \int_{\mathbb{R}} (\tilde \sigma_{t}(x,t)+\hat u \tilde \sigma_{x}(x,t)) \theta(x,t) \ dx \ dt -\int_{\Gamma} e(t) \frac{\partial \theta(\phi(t),t)}{\partial l} dl =0.
 \label{e3.16}
\end{equation}
From \eqref{e3.14} and \eqref{e3.16}, it follows that as $\epsilon \rightarrow 0$ the weak asymptotic solution \eqref{e3.9} indeed satisfies
the integral identities \eqref{e3.2} whereby the theorem follows.

\end{proof}

\begin{remark}
 Thus we have proved the existence of singular solutions for the system \eqref{e1.1} with arbitrary Riemann type initial data \eqref{e3.3} with
singular concentration in the \textit{stress} variable $\sigma$. This solution, as discussed in the introduction, seems to be the physically
relevant one. 
\end{remark}

\section{A few concluding remarks}
It would be important to note that in the previous section on the existence of generalised delta-shock wave type solutions, we didn't 
consider any entropy conditions. There are mainly a couple of reasons behind this. Firstly, if we go through the proofs regarding the existence
of weak asymptotic solutions or generalised delta-shock wave type solutions, it would be interesting to note that the proofs hold true 
for arbitrary Riemann type initial data and hence further restrictions imposed by entropy conditions are not required. Secondly, since we
do not really consider the related problem of uniqueness of solutions in this article, the entropy conditions could be avoided. \\ 
\\
We would like to remark that the \textit{overcompressivity condition} for the delta-shock wave type solutions for the system \eqref{e1.1} with Riemann type initial data \eqref{e3.3} (see for instance \cite{a1,k1})
imposes the condition 
\begin{equation}
u_{0}<\dot \phi{(t)}<u_{0}+u_{1},
 \notag
\end{equation}
which on simplification gives $u_{1}>0$ and $-\frac{u_{1}}{2}<\frac{\sigma_{1}}{u_{1}}<\frac{u_{1}}{2}$.

\end{document}